\documentclass[11pt]{article}

\usepackage[T1]{fontenc}
\usepackage{lmodern}
\usepackage{microtype}
\usepackage{amsmath,amssymb,amsthm,mathtools}
\usepackage{xcolor}
\usepackage[margin=1.08in]{geometry}
\usepackage{enumitem}
\usepackage[
  backend=biber,
  style=alphabetic,
  sorting=nyt,
  giveninits=true,
  maxbibnames=99,
  maxalphanames=5,
  doi=true,
  isbn=false,
  url=false,
  eprint=true
]{biblatex}
\usepackage[
  colorlinks=true,
  linkcolor=magenta,
  citecolor=cyan,
  urlcolor=magenta,
  pdftitle={Kazhdan--Lusztig polynomials of matroids need not be unimodal},
  pdfauthor={Ronnie Cheng and Shurui Liu}
]{hyperref}

\setlist[enumerate]{leftmargin=2.2em,itemsep=2pt,topsep=4pt}
\setlist[itemize]{leftmargin=2em,itemsep=2pt,topsep=4pt}
\allowdisplaybreaks
\numberwithin{equation}{section}
\theoremstyle{plain}
\newtheorem{theorem}{Theorem}[section]
\newtheorem{proposition}[theorem]{Proposition}
\newtheorem{lemma}[theorem]{Lemma}
\newtheorem{corollary}[theorem]{Corollary}

\theoremstyle{definition}
\newtheorem{definition}[theorem]{Definition}

\theoremstyle{remark}
\newtheorem{remark}[theorem]{Remark}

\newcommand{\F}{\mathbb F}

\newcommand{\rk}{\operatorname{rk}}
\newcommand{\Span}{\operatorname{span}}
\newcommand{\qbinom}[2]{\genfrac{[}{]}{0pt}{}{#1}{#2}_{\!q}}
\newcommand{\card}[1]{\lvert #1\rvert}
\newcommand{\set}[1]{\left\{#1\right\}}
\newcommand{\suchthat}{\,\middle|\,}

\title{Kazhdan--Lusztig polynomials of matroids need not be unimodal}
\author{%
  \begin{tabular}{cc}
    Ronnie Cheng & Shurui Liu\\
    \href{mailto:rtcheng@stanford.edu}{\texttt{rtcheng@stanford.edu}}
      & \href{mailto:srliu@stanford.edu}{\texttt{srliu@stanford.edu}}
  \end{tabular}\\[1em]
  {\normalsize\textit{Department of Mathematics, Stanford University}}\\[0.4em]
  {\normalsize July 2026}
}
\date{}

\begin{document}
\maketitle
\begin{abstract}
We construct, over every finite field, representable matroids whose Kazhdan--Lusztig polynomials are not unimodal. In particular, the conjectures that all Kazhdan--Lusztig polynomials of matroids are log-concave and that they are real-rooted are both false.
Our examples are obtained by deleting points from finite projective geometries. More generally, we prove that, under a half-rank degree condition in each contraction quotient, the Kazhdan--Lusztig polynomial of every contraction enumerates the subspaces whose projective points lie in the corresponding deleted set, while the $Z$-polynomial agrees with that of the full projective geometry.
\end{abstract}

\tableofcontents

\section{Introduction}\label{sec:introduction}

For a loopless matroid $M$, let $L(M)$ be its lattice of flats. The Kazhdan--Lusztig polynomial $P_M(t)$ was introduced by Elias, Proudfoot, and Wakefield~\cite{EPW}. The family $M\mapsto P_M(t)$ is uniquely characterized by the following properties~\cite[Theorem~2.2]{BradenVysogorets}:
\begin{enumerate}[label=\textup{(\roman*)}]
\item $P_M(t)=1$ if $\rk M=0$;
\item $\deg P_M(t)<\rk M/2$ if $\rk M>0$;
\item for every matroid $M$, the polynomial
\[
  Z_M(t)=\sum_{F\in L(M)}
  t^{\rk_M(F)}P_{M/F}(t)
\]
is palindromic of degree $\rk M$.
\end{enumerate}
Here $M/F$ denotes the contraction by $F$. The polynomial $Z_M(t)$ in \textup{(iii)}, introduced by Proudfoot, Xu, and Young~\cite[Definition~2.1]{PXY}, is called the $Z$-polynomial of $M$. is called the $Z$-polynomial of $M$. For representable matroids, $P_M(t)$ and $Z_M(t)$ record, respectively, local and global intersection cohomology of varieties attached to a realization~\cite{EPW,PXY}. Their coefficients are nonnegative for all matroids by the singular Hodge theory of Braden, Huh, Matherne, Proudfoot, and Wang~\cite{BHMPW}.

Elias, Proudfoot, and Wakefield conjectured that the coefficients of $P_M(t)$ form a log-concave sequence with no internal zeros~\cite[Conjecture~2.5]{EPW}, and Gedeon, Proudfoot, and Young conjectured that all its zeros are real and negative~\cite[Conjecture~3.2]{GPY}. These conjectures were supported by formulas and computations for many families. Kazhdan--Lusztig polynomials have been studied for uniform and $q$-niform matroids, thagomizer matroids, fan, wheel, and whirl matroids, $\rho$-removed uniform and sparse paving matroids, paving matroids, braid matroids, and Dowling geometries~\cite{GPYEquivariant,GaoUniform,ProudfootQUniform,GedeonThagomizer,LuXieYang,LeeNasrRadcliffeRemoved,LeeNasrRadcliffeSparse,FerroniNasrVecchi,KarnNasrProudfootVecchi,FerroniLarsonBraid,FerroniLarsonDowling}. Real-rootedness was proved for fan, wheel, and whirl matroids and for several families of uniform matroids~\cite{LuXieYang,GaoUniform}; log-concavity was proved for uniform, $q$-niform, and thagomizer matroids~\cite{XieZhangUniform,GaoLiXieYangZhang,WuZhangThagomizer}.

Recall that a finite sequence is \emph{unimodal} if it weakly
increases up to some index and then weakly decreases. We show that, in general, Kazhdan--Lusztig polynomials of matroids need not even be unimodal.

Let $V=\F_q^r$, and let $\mathbb P(V)$ be the set of one-dimensional subspaces of $V$. A subset $E\subseteq\mathbb P(V)$ represents a matroid: the rank of a set of projective points is the dimension of their linear span. Let
$M$ be the matroid represented by
\[
  E=\mathbb P(V)\setminus D,
\]
where $D$ is a set of deleted points. 

We say a subspace $U\le V$ is a \emph{flat span} if $E\cap\mathbb P(U)$ spans $U$; the corresponding flat is $F_U=E\cap\mathbb P(U)$. Conversely, every flat $F$ of $M$ is $F_U$ for the unique flat span $U=\Span(F)$. For every flat span $U$, define the \emph{subspace polynomial} with respect to $U$ by
\begin{equation}\label{eq:intro-CU}
  C_U(t)=
  \sum_{\substack{U\le A\le V\\
        \mathbb P(A)\setminus\mathbb P(U)\subseteq D}}
  t^{\dim A-\dim U}.
\end{equation}
In particular, $C_0(t)$ enumerates the subspaces whose projective points all lie in $D$.

Our main result identifies these subspace polynomials with Kazhdan--Lusztig polynomials when the deleted set $D$ is ``small''.

\begin{theorem}\label{thm:main-intro}
Assume that $E$ spans $V$ and that
\[
  \deg C_U(t)<\frac{r-\dim U}{2}
\]
for every flat span $U < V$. Then
\[
  P_{M/F_U}(t)=C_U(t)
\]
for every flat $F_U$ of $M$. Moreover,
\[
  Z_M(t)=\sum_{j=0}^{r}\qbinom{r}{j}t^j,
\]
where $\qbinom{r}{j}$ is the Gaussian binomial coefficient.
\end{theorem}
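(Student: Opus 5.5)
The plan is to use the characterization (i)--(iii) of Kazhdan--Lusztig polynomials, by strong induction on $r-\dim U$. The contraction $M/F_U$ is represented by the images of $E\setminus\mathbb P(U)$ in $\mathbb P(V/U)$, with multiplicities; parallel elements do not change $L$ or $P$. Its flats correspond to the flat spans $W\ge U$, via $F_W\mapsto F_W\setminus F_U$. In addition, $(M/F_U)/(F_W\setminus F_U)\cong M/F_W$ and $\rk_{M/F_U}=\dim W-\dim U$. It therefore suffices to prove, for every flat span $U$,
\[
Z_{M/F_U}(t):=\sum_{\substack{W\ge U\\ W\text{ flat span}}} t^{\dim W-\dim U}\,C_W(t)=\sum_{j}\qbinom{r-\dim U}{j}t^j .
\]
The right side is palindromic of degree $r-\dim U$. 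Also $C_V=1$, and by hypothesis $\deg C_W<(r-\dim W)/2$ for $W<V$. Uniqueness in the characterization then gives $P_{M/F_W}=C_W$ for all $W\ge U$. Inducting from the top, we may assume $P_{M/F_W}=C_W$ for $W>U$ and solve for $P_{M/F_U}$.

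The key step is the displayed identity. We expand $C_W$ by its definition, so the left side becomes
\[
\sum_{(W,A)} t^{\dim A-\dim U},
\]
summed over pairs consisting of a flat span $W\ge U$ and a subspace $A\ge W$ with $\mathbb P(A)\setminus\mathbb P(W)\subseteq D$. To produce the Gaussian binomials, we show that every subspace $A\ge U$ arises from exactly one such pair. Given $A$, the only candidate is $W=\Span(E\cap\mathbb P(A))+U$, i.e.\ the span of $F_U$ together with the points of $E$ in $A$. Here $U=\Span(F_U)$ because $U$ is a flat span. So $W=\Span(E\cap\mathbb P(A))$, which is a flat span since $E\cap\mathbb P(W)=E\cap\mathbb P(A)$ spans $W$, and $E\cap \mathbb P(A)\subseteq \mathbb P(W)$ gives $\mathbb P(A)\setminus\mathbb P(W)\subseteq D$.

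Conversely, suppose $(W,A)$ is a valid pair. Then $E\cap\mathbb P(A)\subseteq\mathbb P(W)$, so $E\cap\mathbb P(A)=E\cap\mathbb P(W)$, and this spans $W$. Hence $W$ is forced. The sum therefore equals $\sum_{A\ge U}t^{\dim A-\dim U}=\sum_j\qbinom{r-\dim U}{j}t^j$.

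The main obstacle is simply setting this bijection up cleanly. A second point of care is the identification of $M/F_U$ and its flats and contractions with the $W$'s. The hypothesis that $E$ spans $V$ ensures $V$ is a flat span, so $\rk M=r$ and the top term is correct. Taking $U=0$ gives the formula for $Z_M$.
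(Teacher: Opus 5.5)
Your proof is correct and is essentially the paper's argument: the key identity is the surviving-span partition, in which each subspace $A$ is counted exactly once via $W=\sigma(A)=\Span(E\cap\mathbb P(A))$, followed by the same uniqueness step that plays palindromicity against the degree bound. The only difference is bookkeeping: you prove the partition relative to $U$ for all $A\ge U$ inside $V$ and induct on $r-\dim U$, whereas the paper passes to the quotient deletion $D_U\subseteq\mathbb P(V/U)$ and inducts on $r$, which requires the extra compatibility $(D_U)_{W/U}=D_W$ that your formulation sidesteps.
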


The hypothesis says that no deleted subspace in any contraction quotient associated with a flat span $U < V$ reaches half the quotient dimension. Section~\ref{sec:geometry} identifies it with smallness of a natural map between matroid Schubert varieties.

Theorem~\ref{thm:main-intro} yields the following family of Kazhdan--Lusztig polynomials.

\begin{corollary} \label{cor:realization-intro}
Let $q$ be a prime power and let $k$ and $s$ be nonnegative integers. For every sufficiently large $r$, there is an $\F_q$-representable matroid $M$ of rank $r$ such that
\[
  P_M(t)=\sum_{j=0}^{k}\qbinom{k}{j}t^j+s t.
\]
\end{corollary}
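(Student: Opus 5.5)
We will apply Theorem~\ref{thm:main-intro} with $U=0$. The task is to choose a deleted set $D\subseteq\mathbb P(V)$ with two properties. First, $C_0(t)$ should equal $\sum_{j}\qbinom{k}{j}t^j+st$. Second, the degree hypothesis should hold for every flat span $U<V$.

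The construction is as follows. Fix a $k$-dimensional subspace $W\le V$ and delete all of $\mathbb P(W)$. Then choose $s$ further points $p_1,\dots,p_s\notin\mathbb P(W)$ and delete them too, so $D=\mathbb P(W)\cup\{p_1,\dots,p_s\}$. We want the subspaces $A$ with $\mathbb P(A)\subseteq D$ to be exactly the subspaces of $W$ together with the lines $p_i$; these contribute $\sum_j\qbinom{k}{j}t^j$ and $st$ respectively.

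To guarantee this, we place the $p_i$ in general position with respect to $W$ and to each other. Take $r\ge k+s+$ (something large) and let $p_i$ be spanned by vectors $e_{k+i}$, with $W=\langle e_1,\dots,e_k\rangle$ and the $e_j$ a basis. Consider any $A$ with $\mathbb P(A)\subseteq D$ and $A\not\le W$. Such an $A$ contains some $p_i$. If $\dim A\ge2$, then $A$ contains a plane $\langle p_i,x\rangle$, and that plane has $q+1\ge3$ points, all of which must lie in $D$. But a line through $p_i$ and a point $x\in D$ contains other points that are sums like $e_{k+i}+w$ or $e_{k+i}+e_{k+j}$, and none of these is in $D$. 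So $A=p_i$, and the formula for $C_0$ follows. Note also that $E$ spans $V$ as soon as $r>k+s$, since $E$ contains, for example, all points $e_j+e_{j'}$.

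Next we verify the degree hypothesis, which we expect to be the main obstacle. For a flat span $U$, the polynomial $C_U$ counts $A\ge U$ with $\mathbb P(A)\setminus\mathbb P(U)\subseteq D$. Passing to $V/U$, every such $A$ corresponds to $A/U$ of dimension $d$. The key bound to prove is $\dim A-\dim U\le \dim(W+U)/U+1$, or more crudely $\le k+1$. Here is the argument. If $A\ge U$ has all its points outside $\mathbb P(U)$ lying in $D$, then for a complement $A'$ of $U$ in $A$, every point of $A\setminus U$ lies in $D$. Let $n=\dim A'$. The set $A\setminus U$ has $q^{\dim U}(q^{n}-1)/(q-1)$ points. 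Together with the points of $\mathbb P(U)$, all of them lie in $D\cup\mathbb P(U)$. Since $\mathbb P(A)\setminus\mathbb P(U)\subseteq D$, the space $A$ is covered by $U$ and $W\cup\bigcup_i p_i$. A vector space over $\F_q$ cannot be covered by $U$ and $W$ together with a few lines unless $A\le U+W$, or $A$ is $U$ plus one point. More precisely, we will argue as follows. If $x\in A\setminus(U+W)$, then for any $u\in U$, $w\in W\cap A$, the vector $x+u+w$ lies outside $U$ and hence is in $D$. This forces a single line, which bounds the extra dimension by $1$. Hence $\deg C_U\le k+1$.

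Finally we choose $r$ large, say $r>2(k+1)+2\dim U$. This does not immediately handle large $U$: when $\dim U$ is close to $r$, the quotient is small. For those $U$, we instead use the stronger count $\deg C_U\le\dim\big((W+\langle p_i\rangle+U)/U\big)$. We will also need to check that such $U$ being a flat span forces $U$ to contain most of $W$'s directions, because $E\cap\mathbb P(U)$ spans $U$. We expect this bookkeeping over all flat spans $U$ to be the delicate point. If needed, we would spread the $p_i$ so that every quotient of dimension $m$ has deleted subspaces of dimension $<m/2$, for instance by ensuring that $D$ spans a subspace whose intersection with any flat span behaves generically.
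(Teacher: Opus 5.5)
Your construction and your computation of $C_0$ are correct, and they are essentially the paper's. The coordinate points $p_i=\langle e_{k+i}\rangle$ lie in an affine chart of $\langle e_{k+1},\dots,e_r\rangle$ (the complement of the hyperplane $\sum_{i>k}x_i=0$). Your argument shows exactly that $S=\{p_1,\dots,p_s\}$ is line-separated from $W$, i.e.\ every projective line inside $D$ lies in $\mathbb P(W)$. (A small slip: $E$ does not contain $e_j+e_{j'}$ when $j,j'\le k$. Spanning still holds for $r>k+s$, since $e_r$ and every $e_j+e_r$ survive.)

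The genuine gap is the half-rank hypothesis $\deg C_U<(r-\dim U)/2$ for nonzero flat spans $U<V$, which you do not establish. A bound of the form $\deg C_U\le k+1$ cannot work, because $r-\dim U$ goes all the way down to $1$. You need $C_U=1$ whenever $r-\dim U\le2$, and $\deg C_U\le1$ whenever $r-\dim U\in\{3,4\}$. Your fix ``$r>2(k+1)+2\dim U$'' is circular: $r$ is fixed while $U$ ranges over flat spans of every dimension up to $r-1$. The alternatives you float (a bound via $(W+\langle p_i\rangle+U)/U$, showing $U$ absorbs $W$, spreading the $p_i$ further) are not arguments.

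Two ideas close the gap; they are exactly the content of Proposition~\ref{prop:line-separated-body}.

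\emph{First}, $\deg C_U\le1$ for every nonzero flat span $U$. Since $U$ contains a surviving point and $\mathbb P(W)\subseteq D$, we have $U\not\le W$. Suppose $\deg C_U\ge2$. Then there is $B\ge U$ with $\dim B=\dim U+2$ and $\mathbb P(B)\setminus\mathbb P(U)\subseteq D$. Every complement $L=\langle\ell_1,\ell_2\rangle$ of $U$ in $B$ then has $\mathbb P(L)\subseteq D$, hence $L\le W$ by line separation. But for $u\in U\setminus W$, the subspace $\langle\ell_1+u,\ell_2\rangle$ is another complement of $U$ in $B$ that is not contained in $W$, a contradiction. This handles $r-\dim U\ge3$.

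\emph{Second}, suppose $r-\dim U\le2$ and $C_U\ne1$. Then there is $B\ge U$ with $\dim B=\dim U+1$ whose whole fiber $\mathbb P(B)\setminus\mathbb P(U)$, consisting of $q^{\dim U}\ge q^{r-2}$ points, lies in $D$. This is ruled out by taking $r$ so large that $|D|=[k]_q+s<q^{r-2}$.

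Together with $\deg C_0<r/2$ and $r>k+s$, these verify the hypotheses of Theorem~\ref{thm:main-intro} for your $D$. No further spreading of the $p_i$ is needed.
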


For each $q$, taking $k=6$ and
\[
  s=\qbinom{6}{2}-\qbinom{6}{1}+1
\]
gives a nonunimodal polynomial and hence a counterexample over $\F_q$.

\begin{corollary}\label{cor:nonunimodal-intro}
For every finite field $\F_q$, there is an $\F_q$-representable matroid whose Kazhdan--Lusztig polynomial is not unimodal. Consequently, Kazhdan--Lusztig polynomials of matroids need not be log-concave or real-rooted.
\end{corollary}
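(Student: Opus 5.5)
The plan is to specialize Corollary~\ref{cor:realization-intro} to $k=6$ and $s=\qbinom{6}{2}-\qbinom{6}{1}+1$, and then check the non-unimodality directly on the coefficients.

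\emph{Admissibility of $s$.} First I check that $s$ is a nonnegative integer, so that Corollary~\ref{cor:realization-intro} applies. Since $\qbinom{6}{2}=\qbinom{6}{1}\cdot\frac{q^5-1}{q^2-1}\ge \qbinom{6}{1}$, we get $s\ge 1$. Corollary~\ref{cor:realization-intro} then gives, for $r$ large, an $\F_q$-representable matroid $M$ with
\[
  P_M(t)=\sum_{j=0}^{6}\qbinom{6}{j}t^j+s\,t .
\]

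\emph{The coefficient sequence.} By the symmetry $\qbinom{6}{j}=\qbinom{6}{6-j}$, the coefficients $a_0,\dots,a_6$ of $P_M(t)$ are
\[
  1,\quad \qbinom{6}{2}+1,\quad \qbinom{6}{2},\quad \qbinom{6}{3},\quad \qbinom{6}{2},\quad \qbinom{6}{1},\quad 1 .
\]
Here $a_1=\qbinom{6}{1}+s=\qbinom{6}{2}+1$.

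\emph{The dip at index $2$.} Clearly $a_1>a_2$. For the next step, use
\[
  \qbinom{6}{3}=\qbinom{6}{2}\cdot\frac{q^4-1}{q^3-1}.
\]
Since $q^4-1>q^3-1>0$, this gives $a_3>a_2$. So the sequence strictly decreases from index $1$ to $2$ and then strictly increases from $2$ to $3$. A unimodal sequence cannot do this, because once it has weakly decreased it must keep weakly decreasing. Hence $P_M(t)$ is not unimodal.

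\emph{Consequences.} All coefficients are positive, so there are no internal zeros. A positive sequence that is log-concave is unimodal: log-concavity gives $a_{i}/a_{i-1}\ge a_{i+1}/a_i$, so the ratios $a_{i+1}/a_i$ are weakly decreasing, and the sequence rises while these ratios are at least $1$ and falls afterwards. Therefore $P_M(t)$ is not log-concave.

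Finally, suppose $P_M(t)$ had only real zeros. Its coefficients are nonnegative, so by Newton's inequalities the coefficient sequence would be log-concave. This contradicts the previous paragraph, so $P_M(t)$ is not real-rooted either.

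There is no real obstacle at this stage; all the substance is in Corollary~\ref{cor:realization-intro}, and hence in Theorem~\ref{thm:main-intro}. The only point needing care is the strict inequality $\qbinom{6}{3}>\qbinom{6}{2}$ for every prime power $q$, which the displayed ratio settles.
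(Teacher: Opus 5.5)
Your proposal is correct and follows the paper's proof. You specialize Corollary~\ref{cor:realization-intro} to $k=6$ and $s=\qbinom{6}{2}-\qbinom{6}{1}+1$, exhibit the strict valley $\qbinom{6}{2}+1>\qbinom{6}{2}<\qbinom{6}{3}$, and deduce the failure of log-concavity and real-rootedness from positivity and Newton's inequalities; your explicit checks that $s\ge 1$ and that $\qbinom{6}{3}/\qbinom{6}{2}=(q^4-1)/(q^3-1)>1$ supply details the paper leaves implicit.
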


Indeed, a positive log-concave sequence is unimodal, while Newton's inequalities imply that the coefficient sequence of a real-rooted polynomial with nonnegative coefficients is log-concave.

For $q=2$, $k=6$, and $s=589$, the construction gives the following polynomial for a matroid representable over $\F_2$:
\[
  1+652t+651t^2+1395t^3+651t^4+63t^5+t^6.
\]
Its coefficients decrease from $652$ to $651$ and then increase to $1395$. The construction may be carried out in rank $17$, yielding a binary matroid on $2^{17}-1-652=130{,}419$ elements.

Section~\ref{sec:projective-deletions} proves Theorem~\ref{thm:main-intro} and deduces Corollaries~\ref{cor:realization-intro} and~\ref{cor:nonunimodal-intro}.
Section~\ref{sec:geometry} gives a geometric proof using a small map between matroid Schubert varieties.

\section*{Acknowledgements}

The initial counterexample to the real-rootedness conjecture was discovered by Rethlas~\cite{Rethlas}, an AI agent for mathematical research, developed by the Rethlas team, namely Haocheng Ju, Jiedong Jiang, Shurui Liu, Guoxiong Gao, Yuefeng Wang, Zeming Sun, Leheng Chen, Bin Wu, led by Professor Liang Xiao and Professor Bin Dong. The details of AI usage are documented in Appendix~\ref{app:ai} for transparency. The authors would like to thank Haocheng Ju, who kindly helped with the experiments. The authors are very grateful to Matt Larson for many insightful comments that substantially improved the results and the exposition.

\section{Projective deletions}\label{sec:projective-deletions}

We retain the notation of the introduction. Thus $V=\F_q^r$, $D\subseteq\mathbb P(V)$, $E=\mathbb P(V)\setminus D$, and $M$ is the matroid represented by $E$. We write
\[
  [m]_q=1+q+\cdots+q^{m-1}
\]
for the number of points of $\mathbb P(\F_q^m)$ with the convention $[0]_q=0$, and 
\[
  \qbinom{m}{j}
  =\prod_{i=0}^{j-1}\frac{q^{m-i}-1}{q^{j-i}-1}
\]
for the number of $j$-dimensional subspaces of $\F_q^m$.
\subsection{Subspace polynomials and Kazhdan--Lusztig polynomials}
For a subspace $A\le V$, define its \emph{surviving span} by \[
    \sigma(A)=\Span\bigl(E\cap\mathbb P(A)\bigr).
\]

Recall that a subspace $U\le V$ is a flat span if $E\cap\mathbb P(U)$ spans $U$. In this case the corresponding flat is $F_U=E\cap\mathbb P(U)$.

\begin{lemma}\label{lem:surviving-span-body}
The subspace $\sigma(A)$ is a flat span. If $U$ is a flat span, then
\[
  \sigma(A)=U
  \quad\Longleftrightarrow\quad
  U\le A\ \text{ and }\
  \mathbb P(A)\setminus\mathbb P(U)\subseteq D.
\]
\end{lemma}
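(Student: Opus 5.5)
The plan is to unwind the definitions directly, since both claims are elementary linear algebra over $\F_q$. Write $S=E\cap\mathbb P(A)$, so that $\sigma(A)=\Span(S)$.

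For the first claim, we show that $\sigma(A)$ is a flat span, that is, that $E\cap\mathbb P(\sigma(A))$ spans $\sigma(A)$. Since $\sigma(A)\le A$, we have $E\cap\mathbb P(\sigma(A))\subseteq S$. Conversely, every point of $S$ lies in $\Span(S)=\sigma(A)$, so $S\subseteq E\cap\mathbb P(\sigma(A))$. Hence
\[
  E\cap\mathbb P(\sigma(A))=E\cap\mathbb P(A)=S,
\]
and this set spans $\sigma(A)$ by definition. This identity $E\cap\mathbb P(\sigma(A))=E\cap\mathbb P(A)$ is the key observation, and it will be reused below.

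For the equivalence, let $U$ be a flat span.

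\emph{Forward direction.} Suppose $\sigma(A)=U$. Then $U=\sigma(A)\le A$. Now take a point $p\in\mathbb P(A)\setminus\mathbb P(U)$. If $p$ were in $E$, it would lie in $S\subseteq\sigma(A)=U$, which contradicts $p\notin\mathbb P(U)$. Hence $p\in D$.

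\emph{Backward direction.} Suppose $U\le A$ and $\mathbb P(A)\setminus\mathbb P(U)\subseteq D$. Every point of $E\cap\mathbb P(A)$ then avoids $\mathbb P(A)\setminus\mathbb P(U)$, so it lies in $\mathbb P(U)$. Thus $E\cap\mathbb P(A)\subseteq E\cap\mathbb P(U)$. The reverse inclusion holds because $U\le A$, so the two sets are equal. Therefore
\[
  \sigma(A)=\Span(E\cap\mathbb P(U))=U,
\]
where the last equality is exactly the hypothesis that $U$ is a flat span.

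There is no real obstacle here. The only point needing care is that the backward direction genuinely uses the flat-span hypothesis on $U$: without it, one would only get $\sigma(A)\le U$.
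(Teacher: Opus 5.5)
Your proposal is correct and follows essentially the same argument as the paper. Both rest on the identity $E\cap\mathbb P(\sigma(A))=E\cap\mathbb P(A)$ for the first claim. Both directions of the equivalence are then unwound exactly as the paper does, including the use of the flat-span hypothesis on $U$ to conclude $\Span(E\cap\mathbb P(U))=U$.
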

\begin{proof}
The subspace $\sigma(A)$ is a flat span, spanned by $E\cap\mathbb P(\sigma(A))=E\cap\mathbb P(A)$.

If $\sigma(A)=U$, then $U\le A$ and every surviving point of
$\mathbb P(A)$ lies in $\mathbb P(U)$, so
$\mathbb P(A)\setminus\mathbb P(U)\subseteq D$. Conversely, if
$U\le A$ and
$\mathbb P(A)\setminus\mathbb P(U)\subseteq D$, then
\[
  E\cap\mathbb P(A)=E\cap\mathbb P(U)=F_U.
\]
Since $F_U$ spans $U$, one has $\sigma(A)=U$.
\end{proof}
 
We can enumerate all subspaces by their surviving spans.

\begin{proposition}\label{prop:surviving-partition}
One has
\begin{equation}\label{eq:bivariate-C}
  \sum_{F_U\in L(M)}x^{\dim U}C_U(y)
  =
  \sum_{A\le V}x^{\dim\sigma(A)}y^{\dim A-\dim\sigma(A)}.
\end{equation}
Consequently,
\begin{equation}\label{eq:Gaussian-C}
  \sum_{F_U\in L(M)}t^{\dim U}C_U(t)
  =
  \sum_{A\le V}t^{\dim A}
  =
  \sum_{j=0}^{r}\qbinom{r}{j}t^j.
\end{equation}
\end{proposition}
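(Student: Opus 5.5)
The plan is to group the subspaces $A\le V$ according to their surviving span $\sigma(A)$, using Lemma~\ref{lem:surviving-span-body}. By the first part of that lemma, $\sigma(A)$ is always a flat span $U$, and each flat span corresponds to the flat $F_U\in L(M)$. This correspondence is a bijection: every flat $F$ equals $F_U$ for the unique flat span $U=\Span(F)$. The map $A\mapsto\sigma(A)$ therefore partitions the set of all subspaces of $V$ into fibers indexed by the flats $F_U\in L(M)$:
\[
  \{A\le V\}=\bigsqcup_{F_U\in L(M)}\{A\le V : \sigma(A)=U\}.
\]

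The next step is to describe each fiber. By the second part of Lemma~\ref{lem:surviving-span-body}, the fiber over $U$ is exactly the set of $A$ with $U\le A\le V$ and $\mathbb P(A)\setminus\mathbb P(U)\subseteq D$. This is precisely the index set of the sum in \eqref{eq:intro-CU} defining $C_U$. On this fiber, $\dim\sigma(A)=\dim U$, so each summand $x^{\dim\sigma(A)}y^{\dim A-\dim\sigma(A)}$ equals $x^{\dim U}y^{\dim A-\dim U}$. Summing over the fiber therefore gives $x^{\dim U}C_U(y)$. Summing over all flats then yields \eqref{eq:bivariate-C}.

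For \eqref{eq:Gaussian-C}, specialize $x=y=t$. Each term on the right of \eqref{eq:bivariate-C} becomes $t^{\dim A}$. Grouping the subspaces of $V$ by dimension then gives $\sum_j \qbinom{r}{j}t^j$, since $\qbinom{r}{j}$ counts the $j$-dimensional subspaces of $V=\F_q^r$.

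There is no real obstacle. The only points needing care are that distinct flat spans give distinct flats, so that the fibers are indexed without repetition, and the edge case $U=0$. Here $U=0$ is a flat span precisely because $E\cap\mathbb P(0)=\emptyset$ spans $0$, which corresponds to the flat $\emptyset$ of the loopless matroid $M$.
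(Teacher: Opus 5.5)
Your proposal is correct and follows essentially the same route as the paper: you partition all subspaces $A\le V$ by their surviving span $\sigma(A)$, use Lemma~\ref{lem:surviving-span-body} to identify each fiber with the index set of $C_U$, and then specialize $x=y=t$. Your extra remarks, that flat spans correspond bijectively to flats and that $U=0$ corresponds to the flat $\emptyset$, are valid details that the paper leaves implicit.
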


\begin{proof}
Expand the left-hand side of \eqref{eq:bivariate-C} using \eqref{eq:intro-CU}. By Lemma~\ref{lem:surviving-span-body}, a subspace $A$ occurs in exactly one summand, namely the summand indexed by $U=\sigma(A)$, and it contributes
\[
  x^{\dim\sigma(A)}y^{\dim A-\dim\sigma(A)}.
\]
This proves \eqref{eq:bivariate-C}. Setting $x=y=t$ gives \eqref{eq:Gaussian-C}.
\end{proof}

Let $U$ be a flat span. The quotient map induces
\[
  \pi_U:\mathbb P(V)\setminus\mathbb P(U)
  \longrightarrow \mathbb P(V/U),
  \qquad
  L\longmapsto (L+U)/U.
\]
Define
\begin{equation}\label{eq:DU-body}
  D_U=
  \set{\overline L\in\mathbb P(V/U)\suchthat
        \pi_U^{-1}(\overline L)\subseteq D}.
\end{equation}
A quotient direction lies in $D_U$ exactly when its entire projective fiber is contained in $D$. Thus, up to simplification, $M/F_U$ is again a projective deletion,
with deleted set $D_U$. 

\begin{lemma}\label{lem:contraction-body}
For every flat span $U$, the simplification of $M/F_U$ is the matroid corresponding to $\mathbb P(V/U)\setminus D_U$. If $U\le W$ are flat spans, then
\[
  (D_U)_{W/U}=D_W.
\]
\end{lemma}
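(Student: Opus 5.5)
We prove the two assertions separately, working directly from the definitions of contraction and of $D_U$.

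\emph{The simplification of $M/F_U$.} The ground set of $M/F_U$ is $E\setminus F_U=E\setminus\mathbb P(U)$. Its rank function is $\rk_{M/F_U}(S)=\dim(\Span(S)+U)-\dim U=\dim\bigl((\Span(S)+U)/U\bigr)$. This is exactly the rank function of the vector configuration $\pi_U(S)\subseteq\mathbb P(V/U)$. Hence $M/F_U$ is represented by the map $\pi_U$ restricted to $E\setminus\mathbb P(U)$, and the point $L\in E\setminus\mathbb P(U)$ is sent to $\pi_U(L)$.

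This matroid is loopless, since every $L\notin\mathbb P(U)$ has nonzero image. Two elements are parallel exactly when they have the same image under $\pi_U$. The simplification is therefore the matroid represented by the image $\pi_U(E\setminus\mathbb P(U))\subseteq\mathbb P(V/U)$.

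A point $\overline L$ fails to lie in this image exactly when every point of the fiber $\pi_U^{-1}(\overline L)$ lies outside $E$, that is, when $\pi_U^{-1}(\overline L)\subseteq D$. By \eqref{eq:DU-body}, this is precisely the condition $\overline L\in D_U$. So the image equals $\mathbb P(V/U)\setminus D_U$.

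The flat-span hypothesis on $U$ is not needed for this identification. It only guarantees that $F_U$ is a flat with $\Span(F_U)=U$, so that the contraction rank formula above uses $U$ itself.

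\emph{The compatibility $(D_U)_{W/U}=D_W$.} Use the identification $(V/U)/(W/U)\cong V/W$, under which the composite of the quotient maps is $\pi_W$. Fix $\overline{\overline L}\in\mathbb P(V/W)$.

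\begin{itemize}
\item By definition, $\overline{\overline L}\in(D_U)_{W/U}$ means that every point $\overline L\in\mathbb P(V/U)\setminus\mathbb P(W/U)$ lying over $\overline{\overline L}$ belongs to $D_U$.
\item By \eqref{eq:DU-body} again, $\overline L\in D_U$ means that every $L\in\mathbb P(V)\setminus\mathbb P(U)$ lying over $\overline L$ belongs to $D$.
\end{itemize}

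Combining the two conditions, $\overline{\overline L}\in(D_U)_{W/U}$ says that every $L\in\mathbb P(V)\setminus\mathbb P(U)$ with $\pi_U(L)\notin\mathbb P(W/U)$ and $\pi_W(L)=\overline{\overline L}$ belongs to $D$.

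It remains to compare this with $\pi_W^{-1}(\overline{\overline L})=\{L\in\mathbb P(V): L\not\le W,\ \pi_W(L)=\overline{\overline L}\}$. For $L\in\mathbb P(V)$, we have $L\not\le W$ if and only if $L\not\le U$ and $(L+U)/U\not\le W/U$. This holds because $U\le W$: if $L\not\le W$ then $L\not\le U$, and in that case $(L+U)/U\le W/U$ is equivalent to $L\le W$.

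So the set of $L$ quantified over above is exactly $\pi_W^{-1}(\overline{\overline L})$. The combined condition is therefore $\pi_W^{-1}(\overline{\overline L})\subseteq D$, which is the definition of $\overline{\overline L}\in D_W$.

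The only delicate point is the bookkeeping of which points are excluded at each stage, namely points in $\mathbb P(U)$ versus those in $\mathbb P(W)$. The equivalence just checked, which relies on $U\le W$, handles this. Everything else is unwinding definitions.
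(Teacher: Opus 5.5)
Your proof is correct and follows the same route as the paper: you represent $M/F_U$ in $V/U$ by the images of the surviving points outside $\mathbb P(U)$, note that a direction is missing from the simplification exactly when its whole fiber lies in $D$, and obtain $(D_U)_{W/U}=D_W$ by composing $V\to V/U\to V/W$. Where the paper leaves steps implicit, you write out the contraction rank formula and the check, using $U\le W$, that the two-stage fibers coincide with $\pi_W^{-1}$. One wording point: your sentence saying the flat-span hypothesis is not needed is misleading, because $\Span(F_U)=U$ is exactly what places the contraction in $V/U$ rather than in $V/\Span(F_U)$.
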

\begin{proof}
The contraction $M/F_U$ is represented in $V/U$ by the images of the surviving points outside $\mathbb P(U)$. A quotient direction is therefore missing exactly when its entire fiber lies in $D$, which is the definition of $D_U$.

Quotienting first by $U$ and then by $W/U$ is the same as quotienting directly by $W$. A direction modulo $W$ disappears after the two quotients exactly when every original point in its fiber outside $\mathbb P(W)$ belongs to $D$.
\end{proof}
Since simplification preserves the ranked lattice of flats, it does not change the Kazhdan--Lusztig or $Z$-polynomial. Furthermore,
\[
  C_U(t)
  =
  \sum_{\substack{\overline A\le V/U\\
        \mathbb P(\overline A)\subseteq D_U}}
  t^{\dim\overline A},
\]
and $C_U$ is obtained by applying the definition of $C_0$ to the deleted set $D_U$ in the quotient $V/U$.

\begin{proof}[Proof of Theorem~\ref{thm:main-intro}]
We argue by induction on $r$. The rank-zero case is immediate. Let $U\ne0$ be a flat span. By Lemma~\ref{lem:contraction-body}, up to simplification the contraction $M/F_U$ is the projective deletion associated with $D_U$ in $V/U$. The identity
\[
  (D_U)_{W/U}=D_W
\]
shows that the half-rank degree hypotheses are inherited by this contraction.
The induction hypothesis therefore gives
\[
  P_{M/F_U}(t)=C_U(t)
\]
for every nonzero flat span $U$.

Proposition~\ref{prop:surviving-partition} now gives
\[
  C_0(t)
  +
  \sum_{\substack{F_U\in L(M)\\ U\ne0}}
  t^{\dim U}P_{M/F_U}(t)
  =
  \sum_{j=0}^{r}\qbinom{r}{j}t^j.
\]
The right-hand side is palindromic of degree $r$, while
$\deg C_0<r/2$. The palindromic characterization of the
Kazhdan--Lusztig polynomial therefore gives
\[
  P_M(t)=C_0(t).
\]
Substituting $P_{M/F_U}=C_U$ into \eqref{eq:Gaussian-C} gives the
formula for $Z_M(t)$.
\end{proof}

\begin{remark}
Theorem~\ref{thm:main-intro} also admits a proof from the original Kazhdan--Lusztig recursion. Here $\chi_N(t)$ denotes the characteristic polynomial of a matroid $N$, and $M|F$ denotes the restriction of $M$ to $F$:
\[
  t^{\rk M}P_M(t^{-1})
  =
  \sum_{F\in L(M)}
  \chi_{M|F}(t)P_{M/F}(t).
\]
For every projective deletion, the finite-field method gives the parallel identity
\[
  t^rC_0(t^{-1})
  =
  \sum_{F_U\in L(M)}
  \chi_{M|F_U}(t)C_U(t).
\]
After setting $t=q^m$, the right-hand side counts pairs $(A,\psi)$, where $A\le V$ and $\psi:A\longrightarrow\F_{q^m}$ is $\F_q$-linear and nonzero on $\mathbb{P}(A) \cap E$. 
Grouping these pairs by $K=\ker\psi$, a fixed subspace $K$ with $\mathbb P(K)\subseteq D$ contributes $q^{m(r-\dim K)}$, and summing over $K$ gives $q^{mr}C_0(q^{-m})$.
Applying the same identity in every quotient and comparing inductively with the Kazhdan--Lusztig recursion proves Theorem~\ref{thm:main-intro} under the half-rank degree condition.
\end{remark}

\subsection{Counterexamples to unimodality}

We now construct deletion sets for which the half-rank degree condition on $C_U$ is easy to verify.

\begin{definition}\label{def:line-separated-body}
Let $W\le V$ and let $S\subseteq\mathbb P(V)\setminus\mathbb P(W)$. We say that $S$ is \emph{line-separated from $W$} if every projective line contained in $\mathbb P(W)\cup S$ is contained in $\mathbb P(W)$.
\end{definition}

\begin{proposition}\label{prop:line-separated-body}
Let $\dim V=r$, let $0\le k=\dim W<r/2$, and let $S$ be line-separated from $W$. Put
\[
  s=\card S,
  \qquad
  D=\mathbb P(W)\cup S.
\]
If
\begin{equation}\label{eq:size-bound-body}
  \card{D} = [k]_q+s<q^{r-2},
\end{equation}
then the matroid represented by $\mathbb P(V)\setminus D$ has Kazhdan--Lusztig polynomial
\begin{equation}\label{eq:line-separated-body}
  \sum_{j=0}^{k}\qbinom{k}{j}t^j+s t.
\end{equation}
\end{proposition}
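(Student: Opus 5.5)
We apply Theorem~\ref{thm:main-intro}. This requires three things: that $E$ spans $V$; that $C_0(t)$ equals \eqref{eq:line-separated-body}; and that $\deg C_U<(r-\dim U)/2$ for every flat span $U<V$.

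\textbf{Spanning.} A hyperplane of $V$ has $[r-1]_q\ge q^{r-2}>\card D$ points. So no hyperplane contains all of $E$, and $E$ spans $V$. The same counting will be reused in quotients.

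\textbf{Computing $C_0$.} Let $A\le V$ with $\mathbb P(A)\subseteq D$.
\begin{itemize}
\item If $A\le W$, then $A$ is any subspace of $W$; these give $\sum_j\qbinom{k}{j}t^j$.
\item If $A\not\le W$ and $\dim A\ge2$, pick a point $p\in\mathbb P(A)\setminus\mathbb P(W)$, so $p\in S$. Take any line of $\mathbb P(A)$ through $p$. It lies in $\mathbb P(W)\cup S$ but not in $\mathbb P(W)$, contradicting line-separation.
\item Hence the only remaining subspaces are the single points of $S$, contributing $st$.
\end{itemize}
This gives $C_0(t)=\sum_{j=0}^k\qbinom{k}{j}t^j+st$. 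Its degree is $\max(k,1)$ when $s>0$. The case $k=0$ with $s>0$ gives degree $1<r/2$, which needs $r\ge3$; this follows from \eqref{eq:size-bound-body}, since $s\ge1$ and $q^{r-2}>1$. If $k\ge1$, the degree is $k<r/2$.

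\textbf{Nonzero flat spans $U$.} Let $m=\dim U$ with $0<m<r$. By the description after Lemma~\ref{lem:contraction-body}, $\deg C_U$ is the largest $\dim\overline A$ with $\overline A\le V/U$ and $\mathbb P(\overline A)\subseteq D_U$. Such an $\overline A$ of dimension $d$ corresponds to $A\ge U$ with $\mathbb P(A)\setminus\mathbb P(U)\subseteq D$. This is $q^{m}\bigl([d+m]_q-[m]_q\bigr)/q^m\cdot q^m=q^m[d]_q$ points, so
\[
q^{m}[d]_q\le\card D<q^{r-2}.
\]
For $d\ge1$ this gives $q^{m+d-1}\le q^m[d]_q<q^{r-2}$, hence $m+d\le r-2$. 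It remains to show $d<(r-m)/2$, and the bound alone gives only $d\le r-m-2$. This is the main obstacle, and we would handle it with line-separation, as follows.

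\textbf{Using line-separation when $d\ge2$.} If $d\ge2$, then $A\ge U$ has dimension $m+d$ and all points outside $U$ are deleted. A plane $P\le A$ meeting $U$ in a line carries $q^2$ points outside $\mathbb P(U)$, all in $D$.
\begin{itemize}
\item Any line in $P$ not through $U\cap P$ contains one point of $U$ and $q$ points of $D\setminus\mathbb P(U)$.
\item Consider any $2$-dimensional $B\le A$ with $B\cap U=0$; for $d\ge2$, such $B$ exist. Its points are all deleted, so $\mathbb P(B)\subseteq D$. By the computation of $C_0$, this forces $B\le W$.
\item Hence every point of $\mathbb P(A)\setminus\mathbb P(U)$ lying on such a line is in $W$. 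Since these points cover $\mathbb P(A)\setminus\mathbb P(U)$ when $d\ge2$, we get $\mathbb P(A)\setminus\mathbb P(U)\subseteq\mathbb P(W)$.
\item This gives $A\le W+U$. Counting then yields $q^m[d]_q\le[k]_q$, so $m+d\le k<r/2$, and thus $d<r/2-m\le(r-m)/2$.
\end{itemize}
If $d\le1$, we need $1<(r-m)/2$, i.e.\ $m\le r-3$. This follows from $m+1\le r-2$ above.

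This verifies all the hypotheses of Theorem~\ref{thm:main-intro}, and the proposition follows.
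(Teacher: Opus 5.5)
Your proof is correct and essentially follows the paper's route: apply Theorem~\ref{thm:main-intro}, compute $C_0$ by line-separation, and rule out $\deg C_U\ge 2$ because every $2$-dimensional complement of $U$ inside $A$ is forced into $W$. The paper perturbs one complement by some $u\in U\setminus W$ to reach a contradiction, while you cover $\mathbb P(A)\setminus\mathbb P(U)$ by such complements and then count. Your count is not actually needed: $\mathbb P(A)\setminus\mathbb P(U)\subseteq\mathbb P(W)$ already forces $U\le A\le W$, which is impossible for a nonzero flat span. Both proofs then use the size bound to handle $\deg C_U=1$.

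There are two slips to fix. First, your spanning step is a non sequitur. The inequality $[r-1]_q>\card D$ only shows that every hyperplane contains a surviving point. What you need is that $E\subseteq\mathbb P(H)$ would force $D$ to contain the $q^{r-1}>\card D$ points outside $\mathbb P(H)$. Second, your point count is garbled as written; it should read $[m+d]_q-[m]_q=q^m[d]_q$.
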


\begin{proof}
A subspace of dimension at least two whose projective points lie in $D$ is contained in $W$: otherwise it contains a projective line lying in $D$ but not in $\mathbb P(W)$. Hence
\[
  C_0(t)=\sum_{j=0}^{k}\qbinom{k}{j}t^j+s t.
\]
This polynomial has degree $k < r/2$ unless $k=0$ and $s>0$, where the size condition forces $r\ge3$, and then $\deg C_0=1<r/2$.

Let $U\ne0$ be a proper flat span. Since every point of $\mathbb P(W)$ was deleted, one has $U\not\le W$. Suppose that $\deg C_U\ge2$. Then there is a subspace $B > U$ such that
\[
  \dim B=\dim U+2,
  \qquad
  \mathbb P(B)\setminus\mathbb P(U)\subseteq D.
\] Choose a complement $L=\langle \ell_1,\ell_2\rangle$ to $U$ in $B$. Since $\mathbb P(L)\subseteq D$, line separation gives $L\le W$. Choose $u\in U\setminus W$. The subspace
\[
  L'=\langle \ell_1+u,\ell_2\rangle
\]
is another complement to $U$ in $B$, so $\mathbb P(L')\subseteq D$; but $L'\not\le W$, a contradiction. Thus $\deg C_U\le1$.

If $r-\dim U\ge3$, this gives the strict half-rank degree bound. If $r-\dim U\le2$ and $C_U$ were nonconstant, then $D$ would contain an entire one-dimensional quotient fiber, consisting of at least $q^{r-2}$ points, contrary to \eqref{eq:size-bound-body}. Finally, the surviving points span $V$, since otherwise $D$ would contain the $q^{r-1}$ points outside a hyperplane. Theorem~\ref{thm:main-intro} now gives \eqref{eq:line-separated-body}.
\end{proof}

\begin{proof}[Proof of Corollary~\ref{cor:realization-intro}]
Choose $r$ large enough that
\[
  k<\frac r2,
  \qquad
  [k]_q+s<q^{r-2},
  \qquad
  s\le q^{r-k-1}.
\]
Write $V=W\oplus X$ with $\dim W=k$, and choose a hyperplane $H<X$. The affine chart $\mathbb P(X)\setminus\mathbb P(H)$ has $q^{r-k-1}$ points, so it contains a subset $S$ of cardinality $s$. 

We now check that the set $S$ is line-separated from $W$. If a line contained in $\mathbb P(W)\cup S$ is not contained in $\mathbb P(W)$, then it meets $\mathbb P(W)$ in at most one point and therefore contains at least two points of $S$. A line through two points of $S$ meets $\mathbb P(H)$, so it is not contained in $\mathbb P(W)\cup S$. A line through a point of $S$ and a point of $\mathbb P(W)$ contains points with nonzero components in both $W$ and $X$, and is again not contained in $\mathbb P(W)\cup S$. Therefore, Proposition~\ref{prop:line-separated-body} gives the required polynomial.
\end{proof}

\begin{proof}[Proof of Corollary~\ref{cor:nonunimodal-intro}]
Take $k=6$ and
\[
  s=\qbinom{6}{2}-\qbinom{6}{1}+1.
\]
By Corollary~\ref{cor:realization-intro}, the coefficients of $t$, $t^2$, and $t^3$ in the resulting polynomial are
\[
  \qbinom{6}{2}+1,
  \qquad
  \qbinom{6}{2},
  \qquad
  \qbinom{6}{3},
\]
respectively. Since $\qbinom{6}{3}>\qbinom{6}{2}$, these coefficients form a strict local valley, so the polynomial is not unimodal.
\end{proof}

\begin{remark}
Let $W\le V$ have dimension $k<r$, and let $B_{r,k}(q)$ be the matroid
represented by
\[
  \mathbb P(V)\setminus\mathbb P(W).
\]
These matroids are often called Bose--Burton geometries
\cite{BoseBurton,Bonin}. For $m\ge0$, set
\[
  G_{m,q}(t)=\sum_{j=0}^{m}\qbinom{m}{j}t^j,
\]
and adopt the convention $\qbinom{k}{j}=0$ if $j<0$ or $j>k$.

Every nonzero flat span $U$ is not contained in $W$, and one checks
that $D_U=\varnothing$. Thus every nonempty contraction of
$B_{r,k}(q)$ simplifies to a projective geometry and has
Kazhdan--Lusztig polynomial $1$. Consequently,
\[
  Z_{B_{r,k}(q)}(t)
  =
  P_{B_{r,k}(q)}(t)+G_{r,q}(t)-G_{k,q}(t).
\]
Palindromicity of the left-hand side and the Kazhdan--Lusztig degree
bound give
\[
  P_{B_{r,k}(q)}(t)
  =
  \sum_{0\le i<r/2}
  \left(
    \qbinom{k}{i}-\qbinom{k}{r-i}
  \right)t^i.
\]
In particular, if $k<r/2$, then
\[
  P_{B_{r,k}(q)}(t)=G_{k,q}(t),
  \qquad
  Z_{B_{r,k}(q)}(t)=G_{r,q}(t).
\] The constructions above are perturbations of these Bose--Burton geometries by further line-separated point deletions. 
\end{remark}

\section{The geometric argument}\label{sec:geometry}
Section \ref{sec:projective-deletions} gives a combinatorial proof of Theorem~\ref{thm:main-intro}. We now explain the geometry behind the statement. The main construction is a proper map obtained by forgetting the coordinates indexed by $D$. Its fibers are paved by the affine spaces counted by the polynomials $C_U(t)$, and the half-rank degree condition says exactly that this map is small.

Set $\Bbbk=\overline{\F_q}$ and $V_{\Bbbk}=V\otimes_{\F_q}\Bbbk$. Fix a prime
$\ell\ne\operatorname{char}\Bbbk$. We use $\ell$-adic \'{e}tale cohomology and intersection cohomology with coefficients in $\overline{\mathbb Q}_{\ell}$.

Suppose that $E\subseteq\mathbb P(V)$ spans $V$, and let $M$ be the
matroid represented by $E$. Choose a nonzero representative
$v_e\in e$ for every $e\in E$. Evaluation gives an embedding
\[
  V_{\Bbbk}^{\vee}
  \longrightarrow
  \mathbb A_{\Bbbk}^{E},
  \qquad
  \phi\longmapsto\bigl(\phi(v_e)\bigr)_{e\in E}.
\]
Let $L_E$ be its image. The associated matroid Schubert variety is
\[
  Y_M=\overline{L_E}\subseteq(\mathbb P^1_{\Bbbk})^E.
\]
These compactifications were studied by Ardila and Boocher~\cite{ArdilaBoocher}; their relation to the $Z$-polynomial was established by Proudfoot, Xu, and Young~\cite{PXY}.

We recall the part of the boundary stratification that will be used. For $y=(y_e)_{e\in E}\in Y_M$, set
\[
  F(y)=\set{e\in E\suchthat y_e\ne\infty}.
\]
This is a flat of $M$. If $U$ is a flat span, define
\[
  \mathcal S_U
  =
  \set{y\in Y_M\suchthat F(y)=F_U}.
\]
Then
\[
  Y_M=\bigsqcup_{F_U\in L(M)}\mathcal S_U,
  \qquad
  \mathcal S_U\cong U_{\Bbbk}^{\vee}\cong\mathbb A_{\Bbbk}^{\dim U};
\]
see \cite[Lemmas~7.5 and 7.6]{PXY}. In particular, $\mathcal S_V=L_E$ is the dense affine open stratum, and $\mathcal S_0$ is the point all of whose coordinates are $\infty$.

Let $\operatorname{IC}_{Y_M}$ be the middle-perversity intersection complex. The cohomology of its stalk at a point $y\in\mathcal S_U$, with the standard shift, is the local intersection cohomology along that stratum. The geometric interpretations of the two matroid polynomials are
\begin{equation}\label{eq:local-global-IH-body}
  P_{M/F_U}(t)
  =
  \sum_{i\ge0}\dim\operatorname{IH}^{2i}_y(Y_M)t^i,
  \qquad
  Z_M(t)
  =
  \sum_{i\ge0}\dim\operatorname{IH}^{2i}(Y_M)t^i,
\end{equation}
with odd local and global intersection cohomology equal to zero; see \cite[Theorem~3.10]{EPW} and \cite[Lemmas~7.5--7.7 and Theorems~7.1 and~7.2]{PXY}.

Return to the deletion $E=\mathbb P(V)\setminus D$. Let $\widetilde M$ be the matroid realized by $\mathbb{P}(V)$. Forgetting the coordinates indexed by $D$ gives a proper morphism
\[
  \pi:Y_{\widetilde M}\longrightarrow Y_M.
\]
Its image is closed and contains the dense subset $L_E$, so it is all of $Y_M$. The map restricts to an isomorphism from the dense affine open $L_{\mathbb P(V)}$ onto $L_E$, because both evaluation spaces are naturally isomorphic to $V_{\Bbbk}^{\vee}$. Thus $\pi$ is birational.

For a subspace $A\le V$, let $\widetilde{\mathcal S}_A$ be the stratum of $Y_{\widetilde M}$ indexed by $\mathbb P(A)$. Then
\[
  \widetilde{\mathcal S}_A\cong A_{\Bbbk}^{\vee}.
\]
After forgetting the deleted coordinates, this stratum maps to the stratum indexed by the surviving span $\sigma(A)$. If $U=\sigma(A)$, the map between the two strata is the restriction map
\[
  A_{\Bbbk}^{\vee}\longrightarrow U_{\Bbbk}^{\vee}.
\]

Recall that an affine paving of a variety $X$ is a filtration
\[
  \varnothing=X_{-1}\subseteq X_0\subseteq\cdots\subseteq X_N=X
\]
by closed subvarieties such that every difference $X_i\setminus X_{i-1}$ is an affine space.

\begin{proposition}\label{prop:fiber-paving-body}
Let $U$ be a flat span and let $y\in\mathcal S_U$. The reduced fiber $(\pi^{-1}(y))_{\mathrm{red}}$ admits an affine paving whose cells are indexed by the subspaces $A\le V$ satisfying
\[
  U\le A,
  \qquad
  \mathbb P(A)\setminus\mathbb P(U)\subseteq D.
\]
The cell indexed by $A$ has dimension $\dim A-\dim U$. Consequently,
\[
  \sum_{i\ge0}\dim H^{2i}\bigl(\pi^{-1}(y),\overline{\mathbb Q}_{\ell}\bigr)t^i
  =C_U(t),
\]
the odd cohomology of the fiber vanishes, and
\[
  \dim\pi^{-1}(y)=\deg C_U(t).
\]
\end{proposition}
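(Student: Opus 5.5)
The plan is to decompose the fiber $\pi^{-1}(y)$ along the stratification of $Y_{\widetilde M}$ by the strata $\widetilde{\mathcal S}_A$, $A\le V$, and identify each nonempty piece with an affine space.

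\textbf{Step 1: the pieces are affine spaces.} Since $\pi$ maps $\widetilde{\mathcal S}_A$ into the stratum indexed by $\sigma(A)$, the fiber $\pi^{-1}(y)$ with $y\in\mathcal S_U$ is the disjoint union of the sets $\pi^{-1}(y)\cap\widetilde{\mathcal S}_A$ over those $A$ with $\sigma(A)=U$. By Lemma~\ref{lem:surviving-span-body}, these are exactly the subspaces with $U\le A$ and $\mathbb P(A)\setminus\mathbb P(U)\subseteq D$. On such a stratum the map is the restriction $A_{\Bbbk}^{\vee}\to U_{\Bbbk}^{\vee}$, which is a surjective linear map since $U\le A$. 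Identifying $y$ with a functional $\phi\in U_{\Bbbk}^{\vee}$, the piece $\pi^{-1}(y)\cap\widetilde{\mathcal S}_A$ is the set of extensions of $\phi$ to $A$. This is an affine translate of $(A/U)^{\vee}_{\Bbbk}$, so it is an affine space of dimension $\dim A-\dim U$. I will need to check that the identification $\widetilde{\mathcal S}_A\cong A^\vee_\Bbbk$ is compatible with $\mathcal S_U\cong U^\vee_\Bbbk$. Concretely, a point of $\widetilde{\mathcal S}_A$ has finite coordinates $\psi(v_e)$ for $e\in\mathbb P(A)$ and $\infty$ elsewhere, and forgetting $D$ leaves the coordinates $\psi(v_e)$ for $e\in E\cap\mathbb P(A)=F_U$, which determine $\psi|_U$. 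I will use this description from \cite{PXY}, and the compatibility should follow directly from it.

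\textbf{Step 2: closure order.} To obtain a paving, I will order the cells by a linear extension of inclusion on the subspaces $A$ (say by increasing $\dim A$). I then need the union of the cells with $\dim A\le j$ to be closed in the fiber. The closure of $\widetilde{\mathcal S}_A$ in $Y_{\widetilde M}$ is contained in $\bigsqcup_{A'\le A}\widetilde{\mathcal S}_{A'}$, which is the standard closure relation of strata in matroid Schubert varieties: closures of strata correspond to smaller flats. Intersecting with the closed fiber, the closure of a cell only meets cells indexed by smaller subspaces, so the filtration by $\dim A$ is by closed subsets. This is the step I expect to need the most care: I have to confirm the closure relation for $Y_{\widetilde M}$ and work with reduced structures, since the fiber may be nonreduced. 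The latter is harmless for cohomology.

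\textbf{Step 3: conclusions.} For a variety with an affine paving, the compactly supported cohomology, which equals ordinary cohomology because the fiber is proper, is concentrated in even degrees. The long exact sequences of the filtration split because of parity, and each cell of dimension $d$ contributes one class in degree $2d$. Summing gives the Poincaré polynomial $\sum_A t^{\dim A-\dim U}=C_U(t)$, and the odd cohomology vanishes. The dimension of the fiber is the largest cell dimension, which is $\deg C_U$. The fiber is nonempty because the cell $A=U$ is always present.
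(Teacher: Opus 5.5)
Your proposal is correct and follows essentially the same route as the paper. You decompose the reduced fiber along the strata $\widetilde{\mathcal S}_A$ with $\sigma(A)=U$, identify each piece as a fiber of the surjective restriction $A_{\Bbbk}^{\vee}\to U_{\Bbbk}^{\vee}$, order the cells by a linear extension of inclusion using the closure relation among strata, and read off the even cohomology and the dimension from the cells. One small refinement: build the closed filtration from the full linear order rather than from $\dim A$ alone. Initial segments of a linear extension are down-sets, so their unions are closed, whereas filtering only by $\dim A$ gives differences that are disjoint unions of several cells rather than single affine spaces.
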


\begin{proof}
The intersection $\widetilde{\mathcal S}_A\cap(\pi^{-1}(y))_{\mathrm{red}}$ is nonempty exactly when $\sigma(A)=U$. By Lemma~\ref{lem:surviving-span-body}, this is equivalent to
\[
  U\le A,
  \qquad
  \mathbb P(A)\setminus\mathbb P(U)\subseteq D.
\]
For such an $A$, the intersection is a fiber of the surjective restriction map $A_{\Bbbk}^{\vee}\to U_{\Bbbk}^{\vee}$, and hence is an affine space of dimension $\dim A-\dim U$.

The closure of $\widetilde{\mathcal S}_A$ is the union of the strata $\widetilde{\mathcal S}_B$ with $B\le A$. Choose a linear extension of inclusion on the relevant subspaces, with proper subspaces preceding the subspaces that contain them. The successive unions of the corresponding strata are closed. Intersecting this filtration with $(\pi^{-1}(y))_{\mathrm{red}}$ gives the required affine paving.

A $d$-dimensional affine cell contributes one copy of $\overline{\mathbb Q}_{\ell}(-d)$ in compactly supported cohomological degree $2d$ and nothing in odd degree. Passing to the reduction changes neither dimension nor $\ell$-adic cohomology. Since $\pi$ is proper, its fibers are proper, so ordinary and compactly supported cohomology agree. The cells are indexed by exactly the subspaces counted by $C_U(t)$.
\end{proof}

We now impose the half-rank degree condition. Since $\dim Y_M=r$ and $\dim\mathcal S_U=\dim U$, Proposition~\ref{prop:fiber-paving-body} identifies
\[
  \deg C_U(t)<\frac{r-\dim U}{2}
\]
with
\[
  2\dim\pi^{-1}(y)
  <
  \operatorname{codim}_{Y_M}\mathcal S_U
\]
for every nonopen stratum. These are precisely the smallness inequalities for $\pi$.

Every contraction of $\widetilde M$ simplifies to a projective geometry, and hence has Kazhdan--Lusztig polynomial $1$~\cite[Proposition~2.14]{EPW}. By \eqref{eq:local-global-IH-body}, the source $Y_{\widetilde M}$ is rationally smooth, so
\[
  \operatorname{IC}_{Y_{\widetilde M}}
  \cong
  \overline{\mathbb Q}_{\ell}[r].
\]
Let $j:L_E\hookrightarrow Y_M$ denote the inclusion of the dense open
stratum. Since $\pi$ is proper and small and restricts to an isomorphism
over $L_E$, the $\ell$-adic decomposition theorem and its standard
small-map consequence~\cite[Theorem~6.2.5]{BBD} give
\[
  R\pi_*\operatorname{IC}_{Y_{\widetilde M}}
  \cong
  j_{!*}\overline{\mathbb Q}_{\ell}[r]
  =
  \operatorname{IC}_{Y_M}.
\]
Taking the stalk at $y\in\mathcal S_U$ and applying proper base change identifies the local intersection cohomology of $Y_M$ at $y$ with the ordinary cohomology of $\pi^{-1}(y)$. Equations \eqref{eq:local-global-IH-body} and Proposition~\ref{prop:fiber-paving-body} therefore give
\[
  P_{M/F_U}(t)=C_U(t).
\]
This is the geometric proof of the local assertion in Theorem~\ref{thm:main-intro}.

Taking global cohomology in the same pushforward isomorphism gives
\[
  \operatorname{IH}^{i}(Y_M)
  \cong
  H^i\bigl(Y_{\widetilde M},\overline{\mathbb Q}_{\ell}\bigr).
\]
The strata of $Y_{\widetilde M}$ form an affine paving, with one cell of dimension $\dim A$ for every subspace $A\le V$. Hence
\[
  Z_M(t)
  =
  \sum_{A\le V}t^{\dim A}
  =
  \sum_{j=0}^{r}\qbinom{r}{j}t^j,
\]
which is the geometric proof of the global assertion in Theorem~\ref{thm:main-intro}.

\appendix
\refstepcounter{section}\label{app:ai}
\section*{Appendix: Details of AI Usage}
\addcontentsline{toc}{section}{Appendix: Details of AI Usage}

The counterexample that motivated this paper, obtained using the projective-deletion method, was first produced by the automated mathematical reasoning agent Rethlas~\cite{Rethlas}, developed by the Rethlas team, namely Haocheng Ju, Jiedong Jiang, Shurui Liu, Guoxiong Gao, Yuefeng Wang, Zeming Sun, Leheng Chen, Bin Wu, led by Professor Liang Xiao and Professor Bin Dong. 

In this appendix, for transparency we record the experiment in detail: the exact problem we posed (\S\ref{app:ai-prompt}), the output accepted by the Rethlas verification agent (\S\ref{app:ai-output}), and the discovery trajectory that led to it (\S\ref{app:ai-trajectory}).

Note that Lemma~\ref{app:lem-count} is not a new result and it is already in literature (with different notations). It is essentially \cite[Theorem 2.2]{Athanasiadis}, and the idea dates back to \cite[\S 16]{CrapoRota}.

\paragraph{Summary}
We ran Rethlas with base model \texttt{GPT-5.6 Sol} and reasoning effort set to
\texttt{max}. The experiment was carried out on July 13, 2026 and took 4 hours and 59 minutes to produce a solution accepted by the Rethlas verification agent. For comparison we gave the same problem to GPT-5.6 Sol Ultra through its web interface; across two prompts, it failed to give a complete solution.


\subsection{Problem prompt to Rethlas}\label{app:ai-prompt}

\begin{quote}\itshape
For each matroid $M$, define its Kazhdan--Lusztig polynomial
$P_M(t)\in\mathbb Z[t]$ as the unique family satisfying: $P_M(t)=1$ when
$\rk M=0$; $\deg P_M<\rk(M)/2$ when $\rk M>0$; and
$t^{\rk M}P_M(t^{-1})=\sum_{F\in L(M)}\chi_{M|F}(t)\,P_{M/F}(t)$,
where $L(M)$ is the lattice of flats, $M|F$ and $M/F$ are restriction and
contraction, and $\chi$ is the characteristic polynomial. Prove or disprove
that all zeros of $P_M(t)$ lie on the negative real axis.
\end{quote}

\subsection{Output of Rethlas}\label{app:ai-output}

The following is a typeset version of the final output returned by Rethlas.
Only the notation, \LaTeX{} formatting, labels, and cross-references have been adapted;
the mathematical content is unchanged.

\begin{lemma}[finite-field characteristic count]\label{app:lem-count}
Let $E$ be a set of projective points spanning an $n$-dimensional
$\F_q$-space $X$, and let $M(E)$ be its vector matroid. If $Q=q^m$, then

\begin{equation}\label{app:eq-count}
\chi_{M(E)}(Q)=\#\{\phi:X\to\F_Q:\phi(x)\ne0\text{ for every }x\in E\}.
\end{equation}
Moreover, if $I_h(Q)=\prod_{j=0}^{h-1}(Q-q^j)$, then for every $n\ge0$,
\begin{equation}\label{app:eq-QN}
Q^n=\sum_{h=0}^n\qbinom{n}{h}I_h(Q).
\end{equation}
\end{lemma}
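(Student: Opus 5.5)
For \eqref{app:eq-count} I will use the finite-field method of Crapo--Rota. For each flat $F$ of $M(E)$, let $g(F)$ count the $\F_q$-linear maps $\phi:X\to\F_Q$ whose set of vanishing points $\{x\in E:\phi(x)=0\}$ is exactly $F$. For a flat $F$ with span $U_F=\Span(F)$, the maps vanishing on all of $F$ are the maps vanishing on $U_F$. These are the linear maps $X/U_F\to\F_Q$, and there are $Q^{n-\dim U_F}=Q^{\rk M-\rk F}$ of them. Hence
\[
  Q^{\rk M-\rk F}=\sum_{G\ge F}g(G).
\]
Möbius inversion on $L(M(E))$ gives
\[
  g(\hat 0)=\sum_{G}\mu(\hat0,G)\,Q^{\rk M-\rk G}=\chi_{M(E)}(Q),
\]
which is the right-hand side of \eqref{app:eq-count}, since $\hat0$ is the flat of loops and $E$ has none. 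Two points need care. First, the map $\phi$ is only $\F_q$-linear with values in $\F_Q$. Its kernel is still an $\F_q$-subspace, so the vanishing set is $E\cap\mathbb P(\ker\phi)$, which is a flat. Second, the count of maps vanishing on $U$ is $|\operatorname{Hom}_{\F_q}(X/U,\F_Q)|=Q^{\dim X/U}$, because $\F_Q\cong\F_q^m$.

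For \eqref{app:eq-QN} I will count all $\F_q$-linear maps $\phi:X\to\F_Q$ with $\dim X=n$, of which there are $Q^n$, grouped by their image $\operatorname{im}\phi$. It is cleaner to group by the kernel, equivalently by the quotient $X/\ker\phi$ of dimension $h$. For a fixed $K$ of codimension $h$, the maps with kernel exactly $K$ are the injective $\F_q$-linear maps $X/K\cong\F_q^h\hookrightarrow\F_Q$. Choosing the images of a basis one at a time, the $j$-th image must avoid the $\F_q$-span of the previous $j$ images, which has $q^j$ elements. This gives $\prod_{j=0}^{h-1}(Q-q^j)=I_h(Q)$ maps. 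The number of subspaces $K$ of codimension $h$ is $\qbinom{n}{n-h}=\qbinom{n}{h}$, and summing over $h$ gives \eqref{app:eq-QN}.

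Both parts are routine. The only step I expect to need care is the Möbius inversion in the first part: I must check that the statistic ``vanishing set'' really lands in the lattice of flats, and that the rank function there is $\dim\Span$. Both follow from the description of the vanishing set as $E\cap\mathbb P(\ker\phi)$, together with Lemma~\ref{lem:surviving-span-body}-style reasoning about surviving spans.
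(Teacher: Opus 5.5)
Your proof is correct. For the second identity you argue exactly as the paper does: you sort the $Q^n$ maps by kernel, with $\qbinom{n}{h}$ kernels of codimension $h$ and $I_h(Q)$ injections $X/K\to\F_Q$ for each kernel.

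For the first identity you take a different, equally classical route. You follow Crapo--Rota: define $g(F)$ as the number of maps whose vanishing set is exactly the flat $F$, then apply M\"obius inversion on $L(M(E))$. The paper instead starts from the subset expansion $\chi_{M(E)}(Q)=\sum_{A\subseteq E}(-1)^{\card{A}}Q^{n-\rk A}$. It then matches this term by term with inclusion--exclusion over the sets $\{\phi:\phi|_A=0\}$. Both arguments rest on the same fact: the maps vanishing on $A$ are exactly those vanishing on $\Span A$, and there are $Q^{n-\rk A}$ of them.

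What each route buys:
\begin{itemize}
\item The paper's version needs no structural information about vanishing sets, since arbitrary subsets $A$ are allowed. However, it presupposes the subset (Whitney) expansion of $\chi$, which is a theorem rather than the definition when $\chi$ is defined through the M\"obius function.
\item Your version works directly from the M\"obius-function definition. The price is the checks you correctly flag: $E\cap\mathbb P(\ker\phi)$ is a flat; the rank of a flat is the dimension of its span; $n=\rk M(E)$ because $E$ spans $X$; and $\hat 0=\varnothing$ because projective points are never loops.
\end{itemize}
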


\begin{proof}
The subset expansion of the characteristic polynomial is
\[
  \chi_{M(E)}(Q)
  =
  \sum_{A\subseteq E}(-1)^{|A|}Q^{n-\rk_{M(E)}(A)}.
\]
Inclusion--exclusion
gives the same expression for the number of linear maps $X\to\F_Q$ vanishing on
none of the points of $E$, because the maps vanishing on a fixed $A$ form a
space of cardinality $Q^{n-\rk_{M(E)}(A)}$. This proves~\eqref{app:eq-count}.
For~\eqref{app:eq-QN}, count all linear maps from an $n$-space to $\F_Q$ by the
codimension $h$ of their kernel: there are $\qbinom{n}{h}$ choices for the
kernel and $I_h(Q)$ injective maps from the resulting $h$-dimensional quotient
to $\F_Q$.
\end{proof}

\begin{lemma}[projective-deletion convolution]\label{app:lem-convolution}
Let $V$ be an $r$-dimensional $\F_q$-space, let $D\subseteq\mathbb P(V)$, set
$E=\mathbb P(V)\setminus D$, and let $M$ be the matroid represented by $E$. Define
\begin{equation}\label{app:eq-CD}
C_D(t)=\sum_{\substack{A\le V\\\mathbb P(A)\subseteq D}}t^{\dim A},
\end{equation}
where the zero subspace is included in the sum. For a flat $F$ of $M$, put
$U=\Span F$, and define $D_U\subseteq\mathbb P(V/U)$ by declaring a quotient point
$B/U$ with $\dim(B/U)=1$ to lie in $D_U$ if and only if
\begin{equation}\label{app:eq-DU}
\mathbb P(B)\setminus\mathbb P(U)\subseteq D.
\end{equation}
Then
\begin{equation}\label{app:eq-conv}
t^rC_D(t^{-1})=\sum_{F\in L(M)}\chi_{M|F}(t)\,C_{D_U}(t).
\end{equation}
\end{lemma}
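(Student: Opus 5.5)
We prove the identity \eqref{app:eq-conv} as an equality of polynomials in $t$. Both sides are polynomials in $t$, so it suffices to prove it at the infinitely many values $t=Q=q^m$, $m\ge1$. The strategy, as sketched in the remark following the proof of Theorem~\ref{thm:main-intro}, is to show that both sides count the same finite set of pairs
\[
  \mathcal X=\set{(A,\psi)\suchthat A\le V,\ \psi:A\to\F_Q\ \F_q\text{-linear}},
\]
weighted appropriately, and to compute that count in two ways.

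\textbf{Right-hand side.} For a flat $F$ with $U=\Span F$, Lemma~\ref{app:lem-count} applied to the point set $F$ spanning $U$ gives that $\chi_{M|F}(Q)$ is the number of linear $\psi:U\to\F_Q$ vanishing at no point of $F=E\cap\mathbb P(U)$. Next, $C_{D_U}(Q)$ expands as a sum over subspaces $A\ge U$ with $\mathbb P(A)\setminus\mathbb P(U)\subseteq D$ of $Q^{\dim A-\dim U}$; this uses the identification $C_U=C_{D_U}$ recorded after Lemma~\ref{lem:contraction-body}. The factor $Q^{\dim A-\dim U}$ is the number of extensions of $\psi$ from $U$ to $A$. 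Hence the term indexed by $F$ counts pairs $(A,\psi)$ with $\psi:A\to\F_Q$ nonvanishing on $E\cap\mathbb P(U)$ and $\sigma(A)=U$. Indeed, $\mathbb P(A)\setminus\mathbb P(U)\subseteq D$ means that $E\cap\mathbb P(A)=E\cap\mathbb P(U)$, so by Lemma~\ref{lem:surviving-span-body} this is exactly the condition $\sigma(A)=U$. Summing over flats, and using that every $A$ has a unique surviving span, the right-hand side at $t=Q$ equals
\[
  N=\#\set{(A,\psi)\suchthat A\le V,\ \psi\text{ nonzero on every point of }E\cap\mathbb P(A)}.
\]

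\textbf{Left-hand side.} We group the pairs in $N$ by $K=\ker\psi\le A$. The condition on $\psi$ is exactly $\mathbb P(K)\subseteq D$. For fixed $K$, the data $(A,\psi)$ is equivalent to a pair $(A/K,\bar\psi)$ with $\bar\psi:A/K\to\F_Q$ injective. So we must count the subspaces $\bar A\le V/K$ of dimension $h$ together with injective linear maps $\bar A\to\F_Q$, summed over $h$. By the second part of Lemma~\ref{app:lem-count}, with $n=r-\dim K$, this sum is
\[
  \sum_h\qbinom{r-\dim K}{h}I_h(Q)=Q^{r-\dim K}.
\]
Summing over all $K$ with $\mathbb P(K)\subseteq D$ gives
\[
  \sum_K Q^{r-\dim K}=Q^rC_D(Q^{-1}),
\]
which is the left-hand side at $t=Q$.

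\textbf{Main obstacle.} The only delicate step is the bookkeeping on the right-hand side. First, we must match the expansion of $C_{D_U}$ in the quotient $V/U$ with subspaces $A\ge U$ of $V$; Lemma~\ref{lem:contraction-body} and the definition \eqref{app:eq-DU} give exactly this. Second, we must check that the nonvanishing condition on $F$, together with $\sigma(A)=U$, is equivalent to nonvanishing on all of $E\cap\mathbb P(A)$. Once this is established, the identity holds at infinitely many $Q$, and therefore holds as polynomials.
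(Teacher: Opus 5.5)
Your proposal is correct and follows essentially the same argument as the paper. Both evaluate at $Q=q^m$ and read the right side as counting pairs $(A,\psi)$ with $\psi$ nonvanishing on $E\cap\mathbb P(A)$, grouped by surviving span. Both then regroup by $\ker\psi$ and apply $\sum_h\qbinom{n}{h}I_h(Q)=Q^n$. The only difference is that you cite Lemma~\ref{lem:surviving-span-body} and the identification $C_U=C_{D_U}$, where the paper verifies these facts inline.
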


\begin{proof}
For a subspace $U\le V$, the surviving points in $\mathbb P(U)$ form a flat of $M$
precisely when they span $U$; every flat arises uniquely this way, with
$U=\Span F$. In the contraction by this flat, a point of $\mathbb P(V/U)$ is
represented unless every point in its fiber outside $\mathbb P(U)$ was deleted,
so the simplification of $M/F$ is the matroid represented by
$\mathbb P(V/U)\setminus D_U$. Parallel simplification
changes neither the ranked lattice of flats nor the characteristic polynomials
of its intervals nor its Kazhdan--Lusztig polynomial.

We prove~\eqref{app:eq-conv} by evaluating both sides at the infinitely many
integers $Q=q^m$. By Lemma~\ref{app:lem-count}, $\chi_{M|F}(Q)$ counts the maps
$\phi:U\to\F_Q$ nonzero on every surviving point of $\mathbb P(U)$. A subspace
$\bar A\le V/U$ counted by $C_{D_U}(Q)$ has the form $\bar A=B/U$ with
$\mathbb P(B)\setminus\mathbb P(U)\subseteq D$, and its weight $Q^{\dim(B/U)}$ is the number
of extensions of $\phi$ to a linear map $\psi:B\to\F_Q$. Thus the right side
of~\eqref{app:eq-conv}, evaluated at $Q$, counts triples $(U,B,\psi)$ with $U$
spanned by its surviving points and $\psi$ nonzero on those points. Equivalently
it counts pairs $(B,\psi)$ with $B\le V$, $\psi:B\to\F_Q$, and $\psi(x)\ne0$ for
every $x\in\mathbb P(B)\setminus D$: such a pair determines the unique subspace
$U=\Span(\mathbb P(B)\setminus D)$, which gives a flat of $M$ with all points of
$\mathbb P(B)\setminus\mathbb P(U)$ deleted, and restriction of $\psi$ recovers $\phi$.

Now classify these pairs by $A=\ker\psi$; the defining condition is
$\mathbb P(A)\subseteq D$. For a fixed such $A$ with $a=\dim A$, choosing
$h=\dim(B/A)$ gives $\qbinom{r-a}{h}$ choices for $B/A$ and $I_h(Q)$ choices for
the injection $B/A\to\F_Q$, so by~\eqref{app:eq-QN},
$\sum_{h\ge0}\qbinom{r-a}{h}I_h(Q)=Q^{r-a}$. Summing over $A$ shows the right
side of~\eqref{app:eq-conv} equals
$\sum_{A\le V,\,\mathbb P(A)\subseteq D}Q^{r-\dim A}=Q^rC_D(Q^{-1})$, its left side.
Equality at infinitely many $Q$ gives the polynomial identity.
\end{proof}

\begin{lemma}[projective-deletion degree criterion]\label{app:lem-degree}
Use the notation of Lemma~\ref{app:lem-convolution}, and assume the surviving
points span $V$, so that $\rk M=r$. Suppose that for every flat $F$ with
$U=\Span F$ and $\dim U<r$,
\begin{equation}\label{app:eq-degcond}
\deg C_{D_U}<\frac{r-\dim U}{2}.
\end{equation}
Then $P_M(t)=C_D(t)$.
\end{lemma}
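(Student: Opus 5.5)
The plan is a strong induction on $r$, using the convolution identity of Lemma~\ref{app:lem-convolution} together with the uniqueness in the defining characterization of Kazhdan--Lusztig polynomials. The case $r=0$ is trivial: $C_D(t)=1=P_M(t)$, since only the zero subspace has all its projective points in $D=\varnothing$.

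For $r>0$, fix a flat $F$ with $U=\Span F\ne0$. By the first paragraph of the proof of Lemma~\ref{app:lem-convolution}, the simplification of $M/F$ is the projective deletion of $V/U$ by $D_U$, and simplification does not change $P$. This deletion has rank $r-\dim U<r$, and its surviving points span $V/U$, because the images of the points of $E$ span $V/U$. To apply the induction hypothesis to it, I verify the degree hypothesis~\eqref{app:eq-degcond}. A flat of the quotient deletion corresponds to a flat span $W\ge U$ of $M$, and the iterated deleted set satisfies $(D_U)_{W/U}=D_W$: a direction modulo $W$ is missing after both quotients exactly when its whole fiber outside $\mathbb P(W)$ lies in $D$. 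Its rank is $r-\dim W=(r-\dim U)-\dim(W/U)$, so the hypothesis for $W$ in $M$ gives exactly the required condition. Induction then yields $P_{M/F}(t)=C_{D_U}(t)$ for every flat with $U\ne0$.

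Next I compare the Kazhdan--Lusztig recursion with~\eqref{app:eq-conv}. The flat with $U=0$ contributes $\chi_{M|\varnothing}(t)=1$ times $P_M(t)$ in the recursion, and $1$ times $C_D(t)$ in~\eqref{app:eq-conv}. All other terms agree by the previous paragraph. Subtracting the two identities gives
\[
t^rQ(t^{-1})=Q(t), \qquad Q=P_M-C_D.
\]
By the defining degree bound, $\deg P_M<r/2$, and by hypothesis at $U=0$, $\deg C_D<r/2$. Hence $\deg Q<r/2$. The left side is then a combination of monomials $t^j$ with $j>r/2$, while the right side has all degrees below $r/2$. Both sides must therefore vanish, so $P_M=C_D$.

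The main obstacle is the inheritance step: making sure the condition~\eqref{app:eq-degcond} transfers to contractions. This requires checking that $(D_U)_{W/U}=D_W$ and that flats of the simplified contraction correspond bijectively to flat spans $W\ge U$ of $M$. It also requires justifying that the recursion may be applied to the simplification of $M/F$ rather than to $M/F$ itself, which is fine because the lattice of flats is unchanged.
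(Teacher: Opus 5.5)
Your proposal is correct and is essentially the paper's argument. You inherit the degree condition through $(D_U)_{W/U}=D_W$, identify $P_{M/F}=C_{D_U}$ for $U\ne0$ by induction, and subtract the convolution identity~\eqref{app:eq-conv} from the Kazhdan--Lusztig recursion to get $t^rH(t^{-1})=H(t)$ with $\deg H<r/2$, which forces $H=0$. The only cosmetic difference is that you induct on $r$ over all projective deletions, as in the paper's proof of Theorem~\ref{thm:main-intro}, whereas the appendix inducts on the rank of the contractions $M/F_U$ of a fixed $M$.
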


\begin{proof}
If $U\subseteq B$ are spans of flats, quotienting first by $U$ and then by
$B/U$ deletes the same directions as quotienting directly by $B$, so
$(D_U)_{B/U}=D_B$; the flats above $F_U$ are precisely the flats of $M/F_U$, and
hence hypothesis~\eqref{app:eq-degcond} is inherited by every contraction.
Because the surviving points span $V$, their images span every quotient $V/U$,
and since $F_U$ spans $U$ we get $\rk(M/F_U)=r-\dim U$. We show $P_{M/F_U}=C_{D_U}$
for all these contractions by induction on their rank. The rank-zero case is
$U=V$, where $C_{D_V}=1$, matching the normalization. Fix $U$ and assume the
claim for every proper contraction above it; by
Lemma~\ref{app:lem-convolution} those have polynomials $C_{D_B}$. Applying
identity~\eqref{app:eq-conv} in $V/U$ and comparing with the defining recursion
for $P_{M/F_U}$---the reciprocal exponents agree by
$\rk(M/F_U)=r-\dim U$, and all nonempty-flat terms agree by induction---the
difference $H(t)=P_{M/F_U}(t)-C_{D_U}(t)$ satisfies
$t^{r-\dim U}H(t^{-1})=H(t)$. Both summands have degree less than
$(r-\dim U)/2$ by the degree axiom and~\eqref{app:eq-degcond}, so the degrees on
the two sides of this reciprocal identity are disjoint unless $H=0$. Taking
$U=0$ proves $P_M=C_D$.
\end{proof}

\begin{theorem}[explicit counterexample]\label{app:thm-counterexample}
There is a simple binary matroid $M$ of rank $7$ on $114$ points for which
\begin{equation}\label{app:eq-PM}
P_M(t)=1+13t+7t^2+t^3,
\end{equation}
and this polynomial has two nonreal zeros.
\end{theorem}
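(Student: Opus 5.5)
The plan is to recognize the polynomial in \eqref{app:eq-PM} as an instance of Proposition~\ref{prop:line-separated-body}. One then checks its hypotheses numerically and finishes with a discriminant computation.

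\textbf{Matching the parameters.} Take $q=2$, $r=7$ and $k=3$. Then
\[
\sum_{j=0}^{3}\qbinom{3}{j}t^j=1+7t+7t^2+t^3,
\]
so \eqref{app:eq-PM} is \eqref{eq:line-separated-body} with $s=6$. The deleted set is $D=\mathbb P(W)\cup S$, with $\card D=[3]_2+6=13$. The resulting matroid is simple, since it consists of distinct projective points, and it has $127-13=114$ elements.

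\textbf{Checking the hypotheses.} We have $k=3<7/2$, and
\[
\card D=13<2^{5}=q^{r-2},
\]
so \eqref{eq:size-bound-body} holds. For line separation, follow the proof of Corollary~\ref{cor:realization-intro}. Write $V=W\oplus X$ with $\dim X=4$, pick a hyperplane $H<X$, and choose $S$ to be any $6$ of the $2^{3}=8$ points of the affine chart $\mathbb P(X)\setminus\mathbb P(H)$. The argument given there shows that $S$ is line-separated from $W$. Proposition~\ref{prop:line-separated-body} then yields $P_M(t)=1+13t+7t^2+t^3$.

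Equivalently, one can argue directly through Lemma~\ref{app:lem-degree}:
\begin{enumerate}
\item compute $C_D$ by noting that any subspace of dimension at least two inside $D$ lies in $W$;
\item bound $\deg C_{D_U}\le 1$ for $U\ne0$ by the complement-shifting argument;
\item rule out degree $1$ when $\dim V/U\le 2$, since $D$ would then contain a fiber of at least $2^{5}$ points.
\end{enumerate}

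\textbf{Nonreal zeros.} Compute the discriminant of the monic cubic $t^3+7t^2+13t+1$ via $\Delta=18bcd-4b^3d+b^2c^2-4c^3-27d^2$:
\[
\Delta=1638-1372+8281-8788-27=-268<0.
\]
A real cubic with negative discriminant has exactly one real root and a pair of complex-conjugate nonreal roots, which gives the last claim.

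\textbf{Main obstacle.} Given the earlier results, nothing here is genuinely hard. The step that needs the most care is confirming that the degree hypothesis holds in every contraction, in particular the small quotients with $\dim V/U\le2$, where the size bound $13<32$ is essential. That is the step I would double-check.
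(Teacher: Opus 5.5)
Your proposal is correct. With $q=2$, $r=7$, $k=3$, $s=6$ one has $k<r/2$, $[3]_2+6=13<2^5$ and $6\le 2^3=q^{r-k-1}$, so the construction in the proof of Corollary~\ref{cor:realization-intro} applies. The paper's explicit set $S$, six points of $e_4+\langle e_5,e_6,e_7\rangle$, is exactly an instance of your construction.

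Where you differ from the paper is in how you verify the half-rank hypothesis on contractions.

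\textbf{The paper's route.} Its proof is self-contained. It computes $C_D$ by hand; its observation that the sum of two vectors of $S$ lies in $\langle e_5,e_6,e_7\rangle$ is your line-separation argument written in coordinates. It then checks the hypothesis of Lemma~\ref{app:lem-degree} by a pure cardinality count. A violating $B$ would put $2^k(2^h-1)$ points of $\mathbb P(B)\setminus\mathbb P(U)$ into $D$, and every such number exceeds $13$. That count uses nothing about $D$ beyond $\card{D}=13$, so in rank $7$ over $\F_2$ any deletion set of at most $13$ points satisfies the hypothesis automatically. However, it only works when $\card{D}$ is below roughly $q^{r/2}$. For instance, it cannot certify the rank-$17$ binary example of the introduction: there $\card{D}=652$ exceeds $2\,(2^8-1)=510$, the count needed for $\dim U=1$.

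\textbf{Your route.} Going through Proposition~\ref{prop:line-separated-body}, you get $\deg C_U\le 1$ structurally for $U\ne 0$ by complement-shifting. You use the size bound only in corank at most $2$. So you need just $\card{D}<q^{r-2}$, and this is the argument that yields the whole family of Corollary~\ref{cor:realization-intro}.

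\textbf{One small omission.} The statement asserts rank $7$, but Proposition~\ref{prop:line-separated-body} does not record the rank in its statement. You should say explicitly that the surviving points span $V$, for example because $13<2^6$, the number of points off a hyperplane.
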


\begin{proof}
Let $V=\F_2^7$ with basis $e_1,\dots,e_7$, put $W_0=\langle e_1,e_2,e_3\rangle$,
and, identifying a nonzero vector with its projective point, define
\[
S=\{e_4,\;e_4+e_5,\;e_4+e_6,\;e_4+e_7,\;e_4+e_5+e_6,\;e_4+e_5+e_7\},\qquad
D=(W_0\setminus\{0\})\cup S,
\]
and let $M$ be the matroid represented by $\mathbb P(V)\setminus D$. The surviving points span $V$ (for instance
$e_5,e_6,e_7$, $e_1+e_5,e_2+e_5,e_3+e_5$, and $e_4+e_6+e_7$ all survive and span
the basis), so $\rk M=7$; $M$ is simple as a restriction of a projective
geometry, and it has $2^7-1-13=114$ points.

We first count the subspaces $A$ with $\mathbb P(A)\subseteq D$. There is one zero
subspace and $13$ one-dimensional subspaces. The sum of two distinct vectors of
$S$ is a nonzero vector in $\langle e_5,e_6,e_7\rangle$, hence not in $D$; the
sum of a vector of $S$ and a nonzero vector of $W_0$ has nonzero $W_0$- and
$e_4$-components, hence is not in $D$. So no subspace of dimension at least two
containing a point of $S$ lies in $D$, and the remaining contained subspaces lie
in $W_0$: there are $\qbinom{3}{2}=7$ of dimension two, one of dimension three,
and none larger. Consequently
\begin{equation}\label{app:eq-CDvalue}
C_D(t)=1+13t+7t^2+t^3.
\end{equation}

It remains to verify~\eqref{app:eq-degcond}. Let $U$ be the span of a flat with
$k=\dim U<7$, and suppose an $h$-dimensional $\bar A\le V/U$ is counted by
$C_{D_U}$, with preimage $B$. Then $\mathbb P(B)\setminus\mathbb P(U)\subseteq D$, and its
left side has
\[
(2^{k+h}-1)-(2^k-1)=2^k(2^h-1)
\]
points, whereas $|D|=13$. If $h\ge(7-k)/2$, the least admissible $h$ gives
\[
\begin{array}{c|rrrrrrr}
k&0&1&2&3&4&5&6\\\hline
\lceil(7-k)/2\rceil&4&3&3&2&2&1&1\\
2^k(2^h-1)&15&14&28&24&48&32&64
\end{array}
\]
and every entry of the last row exceeds $13$, a contradiction. Hence
$\deg C_{D_U}<(7-k)/2$ for every flat span $U$, and
Lemma~\ref{app:lem-degree} with~\eqref{app:eq-CDvalue} proves~\eqref{app:eq-PM}.

Finally, the discriminant of $t^3+7t^2+13t+1$ is
\[
7^2\cdot13^2-4\cdot13^3-4\cdot7^3-27+18\cdot7\cdot13=-268.
\]
A real cubic with negative discriminant has one real root and a nonreal
conjugate pair, so $P_M$ does not have all of its zeros on the negative real
axis.
\end{proof}

\subsection{Summary of the Rethlas discovery process}\label{app:ai-trajectory}
In this section, we summarize the discovery process of Rethlas, which was reconstructed from the run logs and persistent memory of Rethlas. The process is illustrated in Figure~\ref{fig:dic}.

\begin{figure}
	\begin{center}
		\includegraphics[width=\textwidth]{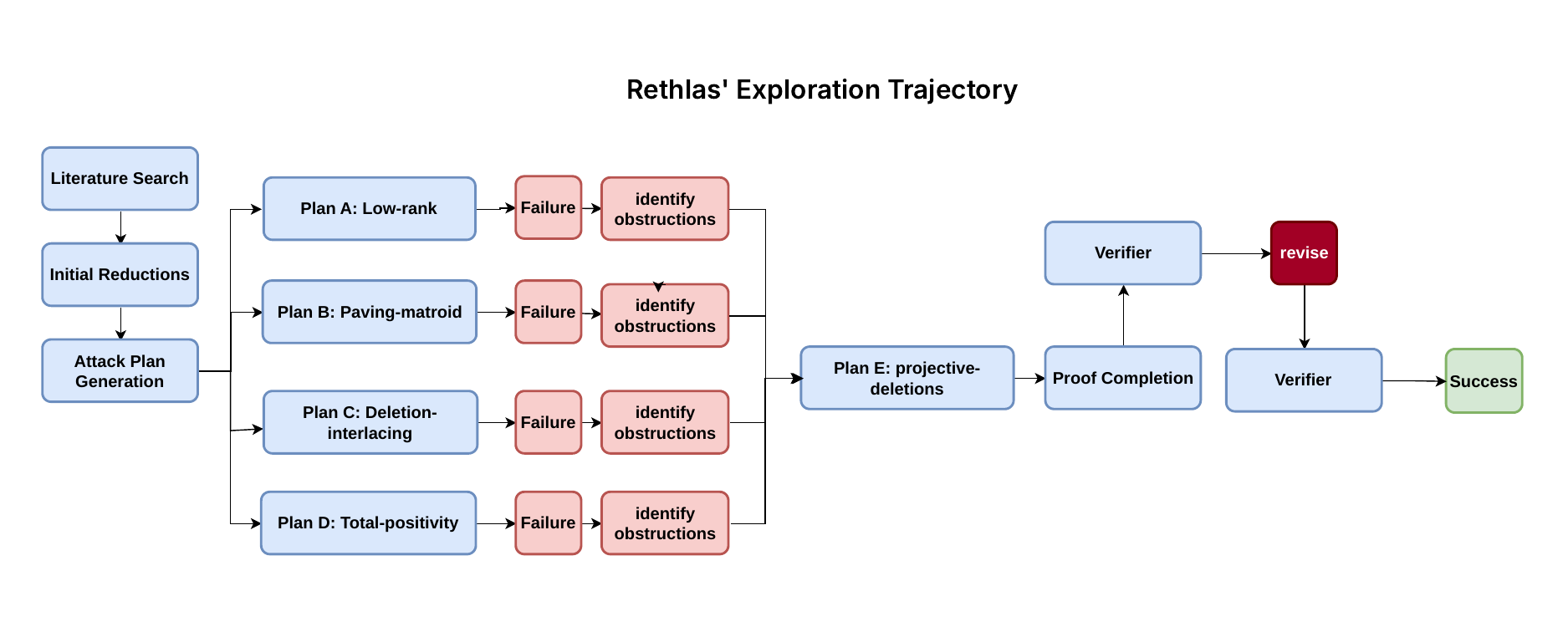}
	\end{center}
	\caption{Discovery process of Rethlas}\label{fig:dic}
\end{figure}

\begin{enumerate}[leftmargin=*]
\item\textbf{Literature and initial reductions.} Rethlas first confirmed that
real-rootedness of matroid Kazhdan--Lusztig polynomials was an open
conjecture~\cite{GPY}. It noted that coefficient
nonnegativity~\cite{BHMPW} excludes nonnegative real zeros and retrieved formulas
for several relevant families.

\item\textbf{Initial approaches and their obstructions.} Rethlas explored four
routes.
\begin{enumerate}[label=\textup{\Alph*.}]
\item\texttt{Low-rank discriminants.} In ranks $5$ and $6$, it reduced real-rootedness of
$P_M(t)=1+at+bt^2$ to the discriminant inequality $a^2\ge4b$; brute-force
searches found no counterexample.
\item\texttt{Paving-matroid relaxation.} Formulas
in~\cite{FerroniNasrVecchi} produced formal
hyperplane counts for which $P_M$ had nonreal zeros, but
Rethlas showed that no matroid realized those counts.
\item\texttt{Deletion-interlacing.} The deletion formula of Braden and
Vysogorets~\cite{BradenVysogorets} contains a negative term that prevents the
standard interlacing argument and leads back to an open contraction-interlacing
problem.
\item\texttt{Total positivity.} Known coefficient-positivity and Hodge-theoretic properties do not by themselves imply real-rootedness, so they could not complete a total-positivity argument.
\end{enumerate}
The paving obstruction was especially informative: prescribing suitable
coefficients was not enough; a better plan may be to construct a counterexample from an explicit
realizable family with computable contractions.

\item\textbf{Projective deletions.} Guided by this requirement, a later round
turned to restrictions of finite projective geometries. These matroids are
representable by construction, and their contractions remain projective
deletions. Deleting a subspace first produced tractable real-rooted families
and exposed the underlying convolution mechanism. Rethlas then allowed more
general sets of deleted points, leading to the convolution identity and degree
criterion reproduced in Section~\ref{app:ai-output}. It first realized
\[
  1+13t+7t^2+t^3
\]
in rank $15$, where the degree bounds hold automatically, and then sharpened
the construction to the rank-$7$ binary matroid in
Theorem~\ref{app:thm-counterexample}.

\item\textbf{Verification and repair.} The Rethlas verification agent rejected
an earlier version because the degree criterion used
$\rk(M/F_U)=r-\dim U$ without a spanning hypothesis. Rethlas added that
hypothesis and verified it for the explicit example. The verification agent
then accepted the revised proof reproduced above.
\end{enumerate}

The rank-$7$ example motivated the general results of this paper.
Theorem~\ref{thm:main-intro} isolates the half-rank degree condition, and
Corollaries~\ref{cor:realization-intro} and~\ref{cor:nonunimodal-intro} produce
nonunimodal examples over every finite field.

\clearpage
\printbibliography

\end{document}